\setlist[enumerate]{leftmargin=2em,itemindent=0em, labelindent=0pt,labelwidth=1.5em,labelsep=.5em, align=left, noitemsep}
\newlist{txtenum}{enumerate}{1}
\setlist[txtenum]{leftmargin=0em,itemindent=1.5em, labelindent=0pt,labelwidth=1em,labelsep=.5em, align=left}
\theoremstyle{plain}
\newtheorem{theorem}{Theorem}
\newtheorem*{theorem*}{Theorem}
\newtheorem{proposition}[theorem]{Proposition}
\newtheorem*{proposition*}{Proposition}
\newtheorem{corollary}[theorem]{Corollary}
\newtheorem*{corollary*}{Corollary}
\newtheorem{lemma}[theorem]{Lemma}
\newtheorem*{lemma*}{Lemma}
\newtheorem*{observation*}{Observation}
\newtheorem*{conjecture*}{Conjecture}
\newtheorem*{question*}{Question}
\newtheorem*{questions*}{Questions}
\newtheorem*{problem*}{Problem}
\newtheorem*{problems*}{Problems}
\newtheorem*{openproblem*}{Open Problem}
\theoremstyle{definition}
\newtheorem*{definition*}{Definition}
\newtheorem*{example*}{Example}
\newtheorem*{exercise*}{Exercise}
\newtheorem*{remark*}{Remark}
\newtheorem*{remarks*}{Remarks}
\theoremstyle{remark}
\newtheorem*{claim*}{Claim}
\newcommand{\subclass}[1]{}
\newcommand{\enumTi}[1]{\renewcommand{\theenumi}{#1}}
\newcommand{\alphenumi}{\enumTi{\alph{enumi}}}
\newcommand{\romenumi}{\enumTi{\roman{enumi}}}
\newlength{\hspaceforlengthglumpf}
\renewcommand{\em}{\sl}
\newcommand{\lt}{\left}
\newcommand{\rt}{\right}
\newcommand{\abs}[1]{{\lt\lvert{#1}\rt\rvert}}
\newcommand{\NN}{\mathbb{N}}
\newcommand{\RR}{\mathbb{R}}
\newcommand{\ZZ}{\mathbb{Z}}
\newcommand{\DDelta}{\mathbb{\Delta}}
\DeclareMathOperator*{\Prb}{\mathbb{P}}
\newcommand{\Mtx}[1]{{\bigl(\begin{smallmatrix}#1\end{smallmatrix}\bigr)}}
\newlength{\algotabbingwidth}
\newcommand{\RV}[1]{\mathbf{#1}}
\DeclareMathOperator{\Rg}{Rg}
\DeclareMathOperator{\MI}{MI}
\DeclareMathOperator{\CoI}{CoI}
\DeclareMathOperator{\SI}{SI}
\DeclareMathOperator{\CI}{CI}
\DeclareMathOperator{\UIy}{UIy}
\DeclareMathOperator{\UIz}{UIz}
\DeclareMathOperator{\Dom}{dom}
\newcommand{\SIext}{\SI^\text{ext}}
\newcommand{\SIprext}{\SI^\text{prext}}
\newcommand{\mySI}{\SI^\clubsuit}
\newcommand{\subgrad}{\mathfrak{g}}
\begin{document}
\title{Optimizing Bivariate Partial Information Decomposition}
\author{Abdullah Makkeh, Dirk Oliver Theis\thanks{Supported by the Estonian Research Council, ETAG (\textit{Eesti Teadusagentuur}), through PUT Exploratory Grant \#620, and by the European Regional Development Fund through the Estonian Center of Excellence in Computer Science, EXCS.}\\[1ex]
  \small Institute of Computer Science {\tiny of the } University of Tartu\\
  \small \"Ulikooli 17, 51014 Tartu, Estonia\\
  \small \texttt{\{makkeh,dotheis\}@ut.ee}%
}
\date{\today}
\maketitle

\begin{abstract}
  None of the BROJA information decomposition measures $\SI, \CI, \UIy, \UIz$ are convex or concave over the probability simplex. In this paper, we provide formulas for the sub-gradient and super-gradients of any of the information decomposition measures. Then we apply these results to obtain an optimum of some of these information decomposition measures when optimized over a constrained set of probability distributions.
\end{abstract}

\section{Introduction}\label{sec:intro}
\subsubsection*{Terminology and notation}
We use the common shorthand $[n] := \{1,\dots,n\}$. For vectors, we use the following summation convention: Replacing an index by an asterisk $*$ has the effect summing over all the possible values, e.g., for $p\in\RR^{A\times B\times C}$, the term $p_{a,*,c}$ stands for $\lt(\sum_{b\in B} p_{a,b,c}\rt)$, e.g.,
\begin{equation*}
  p_{a,*,c} \; p_{*,b,c} = \lt(\sum_{b\in B} p_{a,b,c}\rt) \lt(\sum_{a\in A} p_{a,b,c}\rt)
\end{equation*}

All random variables considered in this paper have finite range (unless explicitly stated otherwise).  Denote by $\Rg\RV{X}$ the range\footnote{The range is a set with the property $\Prb(\RV{X}=x)>0$ for all $x\in\Rg\RV{X}$, and $\Prb(\RV{X}=x)=0$ for all $x\not\in\Rg\RV{X}$.  If a range exists it is unique; if the range exists and is finite, we say that the random variable has ``finite range''.} of the (finite-range) random variable~$\RV{X}$.

For a (finite) set~$X$, we denote the \textit{probability simplex} by
\begin{equation*}
  \DDelta^X := \{ p \in \RR_+^X \mid p_* = 1 \}
\end{equation*}
For us, a probability distribution on a set~$X$, is a vector in~$\DDelta^X$.

\section{Main Theorem: Derivatives of PID-Quantities}

\begin{subequations}\label{eq:BROJA-CP}
  \begin{align}
    M(p) :=           &\max h(q) \\
    \text{over }      &q\in\RR^{S\times Y\times Z} \\
    \text{subject to }&q_{s,y,*} = p_{s,y,*} \qquad\text{for all $(s,y)\in S\times Y$;}\label{eq:BROJA-CP-1}\\
    {}                &q_{s,*,z} = p_{s,*,z} \qquad\text{for all $(s,z)\in S\times Z$;}\label{eq:BROJA-CP-2}\\
    {}                &q_{s,y,z} \ge 0       \qquad\text{for all $(s,y,z)\in S\times Y \times Z$.}\label{eq:BROJA-CP-3}
  \end{align}
\end{subequations}

\begin{proposition}[Corollary~3 in \cite{makkeh2017bivariate}]\label{prop:slackness}
  A feasible point~$q$ is an optimal solution to~\eqref{eq:BROJA-CP}, if and only if there exist $\lambda \in \RR^{S\times Y}$ and $\mu\in\RR^{S\times Z}$ satisfying the following:
  \begin{enumerate}[label=(\alph*)]
  \item For all $(y,z)\in Y\times Z$ with $q_{*,y,z}>0$:
    \begin{equation*}
      \lambda_{s,y} + \mu_{s,z} = \ln\Bigl( \frac{ q_{s,y,z} }{ q_{*,y,z} } \Bigr) \qquad\text{holds for all $s\in S$;}
    \end{equation*}
  \item For all $(y,z)\in Y\times Z$ with $q_{*,y,z}=0$, there is a
    probability distribution $\varrho$ with support~$S$ such that
    \begin{equation*}
      \lambda_{s,y} + \mu_{s,z} \le \ln(\varrho^{y,z}_s) \qquad\text{holds for all $s\in S$.}
    \end{equation*}
  \end{enumerate}
\end{proposition}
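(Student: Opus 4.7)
The plan is to treat this as the KKT optimality characterisation for maximising the concave function $h(q)=H_q(S|Y,Z)$ over the polytope defined by \eqref{eq:BROJA-CP-1}--\eqref{eq:BROJA-CP-3}. Because the constraints are linear, the constraint qualification holds automatically and KKT conditions are both necessary and sufficient for optimality. The twist is that $h$ fails to be differentiable on the boundary of the simplex, so I will formulate KKT in terms of the superdifferential $\partial h$, which is well-defined by concavity.

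The main technical step is to identify $\partial h(q)$ in both regimes. At a point where $q_{*,y,z}>0$ for every $(y,z)$, the function is smooth and a direct computation gives $\partial h/\partial q_{s,y,z}=\ln q_{*,y,z}-\ln q_{s,y,z}$; up to the sign convention under which the Lagrange multipliers encode $-\nabla h$, this reproduces the formula in~(a). At a point where $q_{*,y,z}=0$ for some~$(y,z)$, the whole slice $\{q_{s,y,z}:s\in S\}$ vanishes identically, and a short computation using $0\ln 0=0$ shows that, for any $d\ge 0$ supported in this slice, $h(q+td)-h(q)=t\cdot d_{*,y,z}\cdot H(d_{\cdot,y,z}/d_{*,y,z})$ (the $\log t$ contributions cancel because $\sum_s d_s=d_{*,y,z}$). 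Demanding that $g\cdot d$ dominate this rate for every such $d$ and optimising over the probability distribution $r=d/d_{*,y,z}$ on $S$ reduces, by a one-line Lagrange-multiplier calculation, to the condition $\sum_{s\in S}e^{-g_{s,y,z}}\le 1$, which is in turn equivalent to the existence of a probability distribution $\varrho$ on $S$ with $-g_{s,y,z}\le\ln\varrho_s$ for every~$s$.

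Feeding these two descriptions of $\partial h$ into the standard KKT stationarity ``$g_{s,y,z}=\lambda_{s,y}+\mu_{s,z}-\nu_{s,y,z}$ for some $g\in\partial h(q)$, $\nu_{s,y,z}\ge 0$, $\nu_{s,y,z}q_{s,y,z}=0$'' then produces both conditions. For $(y,z)$ with $q_{*,y,z}>0$ the supergradient entry is single-valued, and since $\partial h/\partial q_{s,y,z}\to+\infty$ as $q_{s,y,z}\to 0^+$, no such coordinate may vanish at an optimum (modulo genuinely degenerate rows or columns of $p$, which I handle by restricting to the support of $p$); then $\nu=0$ and the stationarity equation collapses to~(a). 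For $(y,z)$ with $q_{*,y,z}=0$, complementary slackness imposes no constraint on $\nu_{s,y,z}\ge 0$, and the stationarity identity combined with the boundary description of $\partial h$ yields exactly~(b) with $\varrho^{y,z}$.

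The main obstacle I anticipate is the boundary supergradient computation -- the small simplex optimisation that rewrites the directional-derivative inequality in the exponential form $\sum_s e^{-g_s}\le 1$ and extracts the distribution $\varrho^{y,z}$. Beyond that, the remainder is sign bookkeeping together with the general fact that for concave maximisation over a polyhedron the KKT conditions are equivalent to optimality, which delivers both directions of the ``iff''.
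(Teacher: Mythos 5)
The paper gives no proof of this proposition at all --- it is imported verbatim as Corollary~3 of \cite{makkeh2017bivariate} --- so the only meaningful comparison is with that reference, whose argument is exactly the KKT/complementary-slackness analysis of the concave program \eqref{eq:BROJA-CP} that you carry out. Your outline is correct: the interior gradient $\ln q_{*,y,z}-\ln q_{s,y,z}$, the homogeneity of the slice $q_{\cdot,y,z}\mapsto -\sum_s q_{s,y,z}\ln(q_{s,y,z}/q_{*,y,z})$ at a zero slice, the Gibbs-variational reduction to $\sum_s e^{-g_s}\le 1$, and its equivalence to the existence of $\varrho^{y,z}$ are all right, and the separability of $h$ across $(y,z)$-slices justifies doing the superdifferential computation slice by slice. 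The one point worth making explicit rather than parenthetical is the degenerate case you flag: as literally stated, condition~(a) forces $q_{s,y,z}>0$ whenever $q_{*,y,z}>0$, which can fail when $p$ lacks full support (e.g.\ $p_{s,y,*}=0$ for some $(s,y)$), so the restriction to the support of $p$ --- consistent with the full-support hypothesis the paper imposes everywhere downstream --- is genuinely needed for the ``only if'' direction, not merely a convenience.
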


If $q,\lambda,\mu$ are as in the proposition, then we say that $\lambda,\mu$ are Lagrange multiplyers certifying optimality.

\begin{lemma}\label{lem:subgrad}
    Suppose $p$ has full support.  Let~$q$ be an optimal solution of~\eqref{eq:BROJA-CP}, and let~$\lambda,\mu$ be Lagrange multipliers certifying optimality.
    \begin{enumerate}[label=(\alph*)]
    \item\label{lem:grad:lag} If $q_{s,y,z} > 0$ for all $(s,y,z)\in S\times Y \times Z$, then~$M$ is differentiable in~$p$, and we have
      \begin{equation}
        \partial_{s,y,z} M\Mtx{p} = -\lambda_{s,y} - \mu_{s,z}\label{eq:lag}
      \end{equation}
    \item In any case, the vector defined by
      \begin{equation}
        \subgrad(p)_{s,y,z} := -\lambda_{s,y} - \mu_{s,z}
      \end{equation}
      is a super-gradient on~$M$ in the point $p$.
    \end{enumerate}
\end{lemma}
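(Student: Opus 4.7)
The plan rests on the classical sensitivity (or envelope) theorem for convex optimization: the optimal value of a concave maximization, viewed as a function of the right-hand sides of its equality constraints, is concave, and any vector of Lagrange multipliers certifying optimality at a particular right-hand side is a super-gradient of this value function there. When the primal optimum is unique and no inequality constraints are active, the value function is in fact differentiable and the multipliers constitute its gradient.

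First I would introduce the auxiliary function
\begin{equation*}
  \tilde M(b,c) := \max\bigl\{ h(q) : q_{s,y,*} = b_{s,y},\ q_{s,*,z} = c_{s,z},\ q \ge 0 \bigr\},
\end{equation*}
defined on the set of $(b,c)\in\RR^{S\times Y}\times\RR^{S\times Z}$ for which this program is feasible, together with the affine marginalization map $\pi\colon p\mapsto (p_{S,Y,*}, p_{S,*,Z})$ so that $M = \tilde M\circ\pi$. A direct averaging argument — convex combinations of feasible $q$'s together with concavity of $h$ — shows $\tilde M$ is concave on its domain, and hence so is $M$.

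Because $p$ has full support, $p$ itself is a strictly feasible primal point for the program defining $\tilde M(\pi(p))$, so Slater's condition holds, strong duality applies, and the sensitivity theorem identifies the super-differential of $\tilde M$ at $\pi(p)$ with the set of optimal Lagrange multipliers for the equality constraints there. These multipliers coincide — modulo the gauge freedom $\lambda_{s,y}\mapsto \lambda_{s,y}+\gamma_s$, $\mu_{s,z}\mapsto \mu_{s,z}-\gamma_s$ stemming from the linear dependency $\sum_y q_{s,y,*} = \sum_z q_{s,*,z} = q_{s,*,*}$ among the marginal constraints — with the $(\lambda,\mu)$ of Proposition~\ref{prop:slackness}, and the gauge ambiguity cancels in the sum $\lambda_{s,y}+\mu_{s,z}$. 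The chain rule for super-gradients under an affine pre-composition (if $\nu=(\nu',\nu'')$ is a super-gradient of $\tilde M$ at $\pi(p)$, then $(\pi^{*}\nu)_{s,y,z} = \nu'_{s,y}+\nu''_{s,z}$ is one of $M=\tilde M\circ\pi$ at $p$) now yields part~(b). For part~(a), I add the observation that when all $q_{s,y,z}>0$ no non-negativity constraint is active, so strict concavity of $h$ on the positive orthant forces $q$ to be the unique primal optimum and the KKT stationarity equations fix the sum $\lambda_{s,y}+\mu_{s,z}$ uniquely; a singleton super-differential is equivalent to differentiability for a concave function, whence $\tilde M$, and via the chain rule also $M$, is differentiable at the respective point with gradient as in \eqref{eq:lag}.

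The principal technical obstacle is bookkeeping: carefully matching the specific normalization $\lambda_{s,y}+\mu_{s,z} = \ln(q_{s,y,z}/q_{*,y,z})$ of Proposition~\ref{prop:slackness} to the raw Lagrange multipliers supplied by the envelope theorem applied to $\tilde M$, and then verifying that the minus sign in \eqref{eq:lag} emerges correctly once the pull-back through $\pi^{*}$ and the absorption of the gauge freedom have been accounted for.
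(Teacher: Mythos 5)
Your route---realizing $M$ as the value function $\tilde M$ of the right-hand sides composed with the affine marginalization map, identifying the super-differential of $\tilde M$ with the set of dual optima via the sensitivity theorem, and pulling back so that the multiplier sums $\lambda_{s,y}+\mu_{s,z}$ appear---is in substance the same argument the paper makes; the paper merely compresses the sensitivity step into a citation of Proposition~2 of its reference on the bivariate PID optimization problem together with Proposition~\ref{prop:slackness}, while you make the scaffolding (concavity of $\tilde M$, Slater's condition, the gauge freedom $\lambda_{s,y}\mapsto\lambda_{s,y}+\gamma_s$, $\mu_{s,z}\mapsto\mu_{s,z}-\gamma_s$, the pull-back) explicit. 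Part~(b) of your argument is sound.

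There is, however, one genuinely false step in your part~(a): the conditional entropy $h(q)=H(S\mid Y,Z)$ is \emph{not} strictly concave on the positive orthant, and the primal optimum of~\eqref{eq:BROJA-CP} need not be unique even when a strictly positive optimum exists. Indeed, the second directional derivative of $h$ at $q>0$ in direction $d$ is $-\sum_{s,y,z}d_{s,y,z}^2/q_{s,y,z}+\sum_{y,z}d_{*,y,z}^2/q_{*,y,z}$, which vanishes whenever $d_{s,y,z}=c_{y,z}\,q_{s,y,z}$; such directions can be feasible (for example, if $p$ is uniform on $S\times Y\times Z$ with $\abs{Y},\abs{Z}\ge 2$, then every $q_{s,y,z}=\tfrac{1}{\abs{S}}r_{y,z}$ with $r$ ranging over the transportation polytope with uniform marginals is a strictly positive optimum). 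So ``strict concavity forces $q$ to be the unique primal optimum'' fails. The conclusion survives for a different reason, and you should argue it this way: strong duality holds (Slater, as you note), so \emph{every} dual optimal pair satisfies complementary slackness with the \emph{given} strictly positive primal optimum $q$, and Proposition~\ref{prop:slackness}(a) then forces $\lambda_{s,y}+\mu_{s,z}=\ln\bigl(q_{s,y,z}/q_{*,y,z}\bigr)$ for all of them. Hence all elements of the super-differential of $M$ at $p$ coincide after the pull-back, the super-differential is a singleton, and differentiability of the concave function $M$ with gradient~\eqref{eq:lag} follows---no uniqueness of the primal optimum is needed.
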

\begin{proof}
  	If $q$ is the optimal solution of~\eqref{eq:BROJA-CP}, then $$M(p)=\max_{q} h(q) = -\min_{q} -h(q)=h(q)$$ where $h(q) = H(S\mid Y,Z)$. So, the gradient of $M$ in $p$ is
  	\begin{equation}\label{eq:lem:grad}
  	    \nabla M(p) = -\lt( \ln\lt(\frac{q_{s,y,z}}{q_{*,y,z}}\rt)\rt)_{s,y,z}
  	\end{equation}
	If $q_{s,y,z}>0$ for all $(s,y,z)\in S\times Y\times Z$, then $q_{*,y,s}>0$ for all $(y,s)\in Y\times Z$ and so $M$ is differentiable in $p$. Moreover, Equation~\eqref{eq:lag} follows from the fact that $q,\lambda,\mu$ are as in Proposition~\ref{prop:slackness} and the gradient defined in~\eqref{eq:lem:grad}. 
	
	From~\cite[Proposition 2]{makkeh2017bivariate} and Proposition~\ref{prop:slackness}, we have $\lambda_{s,y} + \mu_{s,z}$ is a sub-gradient to $M'(p) := \min_{q} -h(q)$ subject to the constraints~\eqref{eq:BROJA-CP-1},~\eqref{eq:BROJA-CP-2}, and~\eqref{eq:BROJA-CP-3} in the point $p$. Hence $-\lambda_{s,y} -\mu_{s,z}$ is a super-gradient on $M$ in the point $p$.
\end{proof}

We would like to emphasize that, in this lemma as well as in the following results, the condition that~$p$ has full support is only there to simplify notation, and can be readily abandoned.

\begin{lemma}[\cite{ruszczynski2006nonlinear}, Lemma 2.73]\label{lem:grad-dir-derivative}
    Let $f:\RR^n\to\RR$ be a convex function and $p\in\Dom(f)$. A vector $\subgrad$ is a subgradient of $f$ in the point $p$ iff 
    \begin{equation*}
        f'(p,d)\ge\subgrad^T d\quad\text{for all $d\in\RR^n.$}
    \end{equation*}
\end{lemma}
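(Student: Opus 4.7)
I would work directly from the definition of subgradient: $\subgrad$ is a subgradient of $f$ at $p$ if and only if $f(y) \ge f(p) + \subgrad^T(y-p)$ for every $y\in\RR^n$. The proof splits into the two implications of the equivalence, and in both directions the key tool is the classical observation that, since $f$ is convex, for any fixed $d\in\RR^n$ the difference quotient
\begin{equation*}
    \varphi_d(t) := \frac{f(p+td)-f(p)}{t}
\end{equation*}
is non-decreasing in $t>0$ (the three-slopes inequality applied to the convex restriction $t\mapsto f(p+td)$ along the ray through $p$). In particular the directional derivative $f'(p,d) = \lim_{t\downarrow 0}\varphi_d(t) = \inf_{t>0}\varphi_d(t)$ exists in $\RR\cup\{-\infty\}$ and never exceeds $\varphi_d(t)$ for any $t>0$.

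For the forward implication, assume $\subgrad$ is a subgradient. Fix $d\in\RR^n$ arbitrarily and apply the subgradient inequality with $y := p+td$ for $t>0$, obtaining $f(p+td)-f(p) \ge t\,\subgrad^T d$, hence $\varphi_d(t) \ge \subgrad^T d$. Letting $t\downarrow 0$ and using the infimum characterization of the directional derivative yields $f'(p,d) \ge \subgrad^T d$, as required.

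For the reverse implication, assume $f'(p,d) \ge \subgrad^T d$ holds for every $d\in\RR^n$. Given any $y\in\RR^n$, set $d := y-p$ and evaluate the (non-decreasing) quotient $\varphi_d$ at $t=1$: this gives $f(y)-f(p) = \varphi_d(1) \ge \inf_{t>0}\varphi_d(t) = f'(p,y-p) \ge \subgrad^T(y-p)$. Rearranging recovers the subgradient inequality.

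The only step that needs a pointer to convex analysis is the monotonicity of $\varphi_d$ and the resulting existence of $f'(p,\cdot)$ as the infimum of the difference quotients; this is a textbook fact (and on the interior of $\Dom(f)$ the directional derivatives are finite), so I do not anticipate a real obstacle. Once that is granted, the forward direction is a one-line limit and the reverse direction is a one-line inequality, so the argument is essentially just reading the subgradient inequality through the monotone quotient.
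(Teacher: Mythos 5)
Your argument is correct and is the standard proof of this fact; note that the paper itself gives no proof of this lemma, importing it verbatim from Ruszczy\'nski (Lemma~2.73), so there is nothing in the paper to compare against. Both implications are handled properly: the monotonicity of the difference quotient $t\mapsto\varphi_d(t)$ for a convex $f$ (hence $f'(p,d)=\inf_{t>0}\varphi_d(t)$, finite since $f$ is real-valued on all of $\RR^n$) is exactly the right tool, the forward direction follows by passing to the infimum, and the reverse direction by evaluating the quotient at $t=1$ with $d=y-p$.
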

\begin{theorem}\label{thm:der-PID}
  Suppose $p$ has full support.  Let~$q$ be an optimal solution of~\eqref{eq:BROJA-CP}, and let~$\lambda,\mu$ be Lagrange multipliers certifying optimality.
  \begin{enumerate}[label=(\alph*)]
  \item\label{thm:PID-quan} If $q_{s,y,z} > 0$ for all $(s,y,z)\in S\times Y \times Z$, then $\CI$, $\SI$, $\UIy$, $\UIz$ are all differentiable in~$p$, and we have
    \begin{subequations}
      \begin{align}
        \partial_{s,y,z} \CI(p)  &= \ln\lt(\frac{p_{*,y,z}}{p_{s,y,z}}\rt) - \lambda_{s,y} - \mu_{s,z} \\
        \partial_{s,y,z} \SI(p)  &= - 1  + \ln\lt(\frac{p_{s,y,*} p_{s,*,z}}{p_{x,*,*}p_{*,y,*} p_{*,*,z}}\rt) - \lambda_{s,y} - \mu_{s,z} \\
        \partial_{s,y,z} \UIy(p) &= \ln\lt(\frac{p_{*,*,z}}{p_{s,*,z}}\rt) + \lambda_{s,y} + \mu_{s,z} \\
        \partial_{s,y,z} \UIz(p) &= \ln\lt(\frac{p_{*,y,*}}{p_{s,y,*}}\rt) + \lambda_{s,y} + \mu_{s,z}
      \end{align}
    \end{subequations}
  \item\label{thm:PID-super-gradient} In any case, the vectors defined by 
     \begin{subequations}
         \begin{align}
            \subgrad_{\CI}(p)_{s,y,z}     &= \ln\lt(\frac{p_{*,y,z}}{p_{s,y,z}}\rt) -\lambda_{s,y} -\mu_{s,z}\label{eq:sup-grad-CI}\\
            \subgrad_{\SI}(p)_{s,y,z}     &= - 1  + \ln\lt(\frac{p_{s,y,*} p_{s,*,z}}{p_{x,*,*}p_{*,y,*} p_{*,*,z}}\rt) - \lambda_{s,y} - \mu_{s,z}\label{eq:sup-grad-SI}
         \end{align}
     \end{subequations}
     are local super-gradients of $\CI$ and $\SI$ respectively and the vectors defined by 
     \begin{subequations}
         \begin{align}
            \subgrad_{\UIy}(p)_{s,y,z}    &= \ln\lt(\frac{p_{*,*,z}}{p_{s,*,z}}\rt) + \lambda_{s,y} + \mu_{s,z}\label{eq:sub-grad-UIy} \\
            \subgrad_{\UIz}(p)_{s,y,z}    &= \ln\lt(\frac{p_{*,y,*}}{p_{s,y,*}}\rt) + \lambda_{s,y} + \mu_{s,z}\label{eq:sub-grad-UIz}
         \end{align}
     \end{subequations}
     are local subgradients of $\UIy$ and $\UIz$ in the point $p$ respectively.
  \end{enumerate}
\end{theorem}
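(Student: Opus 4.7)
The plan is to reduce the theorem to routine differentiation of smooth quantities together with one application of Lemma~\ref{lem:subgrad}. Because the constraints~\eqref{eq:BROJA-CP-1}--\eqref{eq:BROJA-CP-2} force $q$ and $p$ to share the $S$-, $(S,Y)$-, and $(S,Z)$-marginals, one has $I_q(S;Y,Z) = H_p(S) - M(p)$, and the standard BROJA algebra then expresses each of the four PID quantities as a linear combination of $M(p)$ and of entropies/mutual informations of marginals of $p$, namely $\CI(p) = M(p) - H_p(S\mid Y,Z)$, $\UIy(p) = H_p(S) - M(p) - I_p(S;Z)$, $\UIz(p) = H_p(S) - M(p) - I_p(S;Y)$, and $\SI(p) = I_p(S;Y) + I_p(S;Z) - H_p(S) + M(p)$.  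On the full-support region the ``non-$M$'' summand in each decomposition is differentiable, with routinely computed partial derivatives such as $\partial_{s,y,z} H_p(S\mid Y,Z) = \ln(p_{*,y,z}/p_{s,y,z})$ and $\partial_{s,y,z} I_p(S;Y) = \ln(p_{s,y,*}/(p_{s,*,*}p_{*,y,*})) - 1$, and analogous expressions for the other mutual-information terms.

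For part~\ref{thm:PID-quan}, I combine these mechanical derivatives with Lemma~\ref{lem:subgrad}\ref{lem:grad:lag}, which gives $\partial_{s,y,z} M(p) = -\lambda_{s,y} - \mu_{s,z}$ under the strict-positivity hypothesis on~$q$, and sum term by term.  This produces the four claimed gradient formulas; the only non-trivial bookkeeping is tracking the sign with which $M$ enters each decomposition.

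For part~\ref{thm:PID-super-gradient}, I first observe that $M$, being the value function of a concave-objective maximization whose feasible set depends linearly on~$p$, is concave in~$p$; Lemma~\ref{lem:subgrad}\ref{lem:grad:lag} therefore yields $-(\lambda_{s,y}+\mu_{s,z})$ as a (global) super-gradient of~$M$ at~$p$.  The key remaining step is a sum rule for local super/sub-gradients read off from Lemma~\ref{lem:grad-dir-derivative}: if $s$ satisfies $M'(p,d)\le s\cdot d$ for all $d$ and $h$ is smooth at~$p$, then $(M+h)'(p,d) = M'(p,d) + \nabla h(p)\cdot d \le (s+\nabla h(p))\cdot d$, so $s+\nabla h(p)$ is a local super-gradient of $M+h$; dually, $-s+\nabla h(p)$ is a local sub-gradient of $-M+h$.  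I expect this sum rule, together with getting the signs right, to be the main obstacle: $\CI$ and $\SI$ contain $+M$ and inherit a super-gradient, whereas $\UIy$ and $\UIz$ contain $-M$ and inherit a sub-gradient.  Plugging in the explicit gradients of the smooth parts from part~\ref{thm:PID-quan} then delivers formulas~\eqref{eq:sup-grad-CI}--\eqref{eq:sub-grad-UIz}.
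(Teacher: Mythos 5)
Your proposal is essentially the paper's own argument: the same decomposition of each PID quantity into $M(p)$ plus smooth functions of the marginals of~$p$, the same appeal to Lemma~\ref{lem:subgrad} for $\partial_{s,y,z}M(p)=-\lambda_{s,y}-\mu_{s,z}$ and for the super-gradient of~$M$, and the same sum rule via Lemma~\ref{lem:grad-dir-derivative} for part~(b) (which the paper leaves implicit and you state explicitly); your intermediate identities are in fact written more carefully than the paper's, which has sign slips in its expressions for $\UIy$ and $\UIz$ (it writes $\MI(S;Z)+H(S)-M(p)$ where $H(S)-\MI(S;Z)-M(p)=H(S\mid Z)-M(p)$ is meant). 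One caveat: with your correct identities $\CI(p)=M(p)-H_p(S\mid Y,Z)$ and $\partial_{s,y,z}H_p(S\mid Y,Z)=\ln\bigl(p_{*,y,z}/p_{s,y,z}\bigr)$, the term-by-term sum gives $\ln\bigl(p_{s,y,z}/p_{*,y,z}\bigr)-\lambda_{s,y}-\mu_{s,z}$, i.e., the logarithm enters with the \emph{opposite} sign to the one displayed in the theorem and in~\eqref{eq:sup-grad-CI}, so your claim that the computation ``produces the four claimed gradient formulas'' is not literally true for $\CI$. The discrepancy appears to be a sign error in the statement itself rather than in your method (sanity check: when the optimizer is $q=p$, one has $\CI(p)=0$, a minimum of the nonnegative function $\CI$ at an interior point of the simplex, so the tangential gradient must vanish --- which your sign achieves and the displayed one does not); you should flag this explicitly rather than assert agreement, since the paper's own proof glosses over exactly the same point with ``by direct computations the equations follow.''
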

\begin{proof}
    For~\ref{thm:PID-quan}, Bertschinger et al. in~\cite{bertschinger-rauh-olbrich-jost-ay:quantify:2014} defined the partial information decomposition as follows:
    \begin{equation*}
      \begin{aligned}
        \CI(p)  &= \MI(S;Y,Z) - \min_{q} \MI(S;Y,Z) \\
        \SI(p)  &= \max_{q}\CoI(S;Y;Z) \\
        \UIy(p) &= \min_{q}\MI(S;Y\mid Z)\\
        \UIz(p) &= \min_{q}\MI(S;Z\mid Y)\\
      \end{aligned}
    \end{equation*}
    where the optimization is subject to the constraints~\eqref{eq:BROJA-CP-1},~\eqref{eq:BROJA-CP-2}, and~\eqref{eq:BROJA-CP-3}. Using the definition of $\MI(S;Y,Z)$ and the chain rule, we get

    \begin{equation*}
      \begin{aligned}
        \CI(p)  &= M(p) - H(S\mid Y,Z) \\
        \SI(p)  &= M(p) + \MI(S;Y) - H(S\mid Z) \\
        \UIy(p) &= \MI(S;Z) + H(S) -M(p)\\
        \UIz(p) &= \MI(S;Y) + H(S) -M(p)\\
      \end{aligned}
    \end{equation*}
    where $H(S\mid Y,Z), H(S\mid Z),\MI(S;Y),$ and $\MI(S;Y)$ are functions of $p$.  By direct computations the equations in~\ref{thm:PID-quan} follow using the fact $\partial_{s,y,z} M(p) = -\lambda_{s,y} - \mu_{s,z}$. 
    
    
    For~\ref{thm:PID-super-gradient}, let 
    \begin{equation}\label{eq:g-funs}
        \begin{aligned}
            g_{\CI}(p)    &= H(S\mid Y,Z)         \\
            g_{\SI}(p)    &= \MI(S;Y) - H(S\mid Z) \\
            g_{\UIy}(p)   &= \MI(S;Z) + H(S)          \\
            g_{\UIz}(p)   &= \MI(S;Y) + H(S).\\  
        \end{aligned}
    \end{equation}
    Since $p$ has a full support then all the functions in~\eqref{eq:g-funs} are differentiable and
    \begin{equation}\label{eq:dir-g-funs}
        \begin{aligned}
            g'_{\CI}(p,d)    &= \sum_{s,y,z}\ln\lt(\frac{p_{*,y,z}}{p_{s,y,z}}\rt)d_{s,y,z} \\
            g'_{\SI}(p,d)    &= \sum_{s,y,z}\lt(\ln\lt(\frac{p_{s,y,*} p_{s,*,z}}{p_{x,*,*}p_{*,y,*} p_{*,*,z}}\rt)-1\rt)d_{s,y,z} \\
            g'_{\UIy}(p,d)   &= \sum_{s,y,z}\ln\lt(\frac{p_{*,*,z}}{p_{s,*,z}}\rt)d_{s,y,z} \\
            g'_{\UIz}(p,d)   &= \sum_{s,y,z}\ln\lt(\frac{p_{*,y,*}}{p_{s,y,*}}\rt)d_{s,y,z}.\\  
        \end{aligned}
    \end{equation}
     From Lemma~\ref{lem:subgrad} and Lemma~\ref{lem:grad-dir-derivative}, $\subgrad(p)$ is a super-gradient of $M$ at $p$ and  for any $d\in\RR^{S\times Y\times Z}$, we have   $-M'(p,d)\ge -\subgrad^T d$. Hence, the vectors defined by~\eqref{eq:sup-grad-CI} and~\eqref{eq:sup-grad-SI} are super-gradients of $\CI$ and $\SI$ respectively and the vectors defined by~\eqref{eq:sub-grad-UIy} and~\eqref{eq:sub-grad-UIz} are local subgradients of $\UIy$ and $\UIz$ in the point $p$ respectively.
\end{proof}

\begin{corollary}
  Let $I$ be any of $\CI,\SI$, $\UIy,\UIz$.  At the points where~$I$ is not smooth it is
  \begin{enumerate}[label=(\alph*)]
  \item concave
  , in the case of $I=\CI,\SI$;
  \item convex, in the case of $I=\UIy,\UIz$.
  \end{enumerate}
\end{corollary}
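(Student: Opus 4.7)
The plan is to read this corollary as an essentially immediate consequence of Theorem~\ref{thm:der-PID}\ref{thm:PID-super-gradient}. The key observation is that the ``in any case'' clause of that theorem supplies an explicit local super-gradient of $\CI$ and $\SI$, and an explicit local sub-gradient of $\UIy$ and $\UIz$, at \emph{every} $p$ of full support --- regardless of whether the optimizer $q$ has full support, i.e.\ regardless of whether $I$ happens to be smooth at $p$.

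First I would nail down what ``concave at a point'' is supposed to mean. Since the abstract explicitly rules out global concavity or convexity for these measures, the only reading of the corollary consistent with the rest of the paper is the pointwise, first-order one: $f$ is concave at $p$ iff its superdifferential there is non-empty, i.e.\ iff there exists $g$ with $f'(p,d)\le g^\T d$ for all directions $d$. Dually, $f$ is convex at $p$ iff a sub-gradient exists at $p$ in the sense of Lemma~\ref{lem:grad-dir-derivative}. At a smooth point both notions reduce to $\nabla f(p)$; at a non-smooth point, having a super-gradient forces $f'(p,d)+f'(p,-d)\le 0$ for all $d$ (a concave kink), while having a sub-gradient forces $f'(p,d)+f'(p,-d)\ge 0$ (a convex kink).

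Given this framing, the proof is one line in each direction. For $I\in\{\CI,\SI\}$, Theorem~\ref{thm:der-PID}\ref{thm:PID-super-gradient} hands us the local super-gradient $\subgrad_I(p)$ at any $p$, so the superdifferential of $I$ at any non-smooth $p$ is non-empty, which is exactly concavity of $I$ at $p$. Symmetrically, for $I\in\{\UIy,\UIz\}$, the theorem supplies a local sub-gradient of $I$ at every $p$, so those two are convex at any non-smooth $p$.

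The only real difficulty is interpretive --- reconciling the word ``concave'' in the corollary with the abstract's non-concavity statement. Once one commits to the pointwise ``superdifferential non-empty'' reading, no further calculation is needed; the corollary is a direct restatement of Theorem~\ref{thm:der-PID}\ref{thm:PID-super-gradient}, specialized to the non-smooth points.
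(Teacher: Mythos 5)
Your proof takes essentially the same route as the paper's: both derive the corollary directly from the existence of the local super-gradients of $\CI,\SI$ and local sub-gradients of $\UIy,\UIz$ supplied by Theorem~\ref{thm:der-PID}\ref{thm:PID-super-gradient}. The only difference is that you spell out the pointwise reading of ``concave/convex at a point'' as non-emptiness of the super-/sub-differential, which the paper leaves implicit.
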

\begin{proof}
  Using Theorem~\ref{thm:PID-quan}, the vectors $\subgrad_{\CI}(p)$ and $\subgrad_{\SI}(p)$ are local super-gradients of $\CI$ and $\SI$ and the vectors $\subgrad_{\UIy}(p)$ and $\subgrad_{\UIz}(p)$ are local sub-gradients of $\UIy$ and $\UIz$ in the point $p.$ From this, the statements in this Corollary follow. 
\end{proof}

\section{Application I: Extractable Shared Information}
Let $\RV{S},\RV{Y},\RV{Z}$ are random variables with joint probability distribution~$p$, and denote by $S,Y,Z$ the ranges, respectively, of $\RV{S},\RV{Y},\RV{Z}$.

For a set~$R$ and a $m\in\NN$, a \textit{stochastic $([m]\times R)$-matrix} is a matrix~$\Pi$ with~$m$ rows (indexed $1,\dots,m$ as usual) and columns indexed by the elements of~$R$, whose entries are nonnegative reals such that $\Pi_{*,s}=1$.
Let~$p$ be a probability distribution on $S\times Y \times Z$, and~$\Pi$ be a stochastic $([m]\times S)$-matrix.  Then we define the probabilty distribution $\Pi(p)$ as follows:
\begin{equation*}
  \Pi(p)_{t,y,z} := \sum_{s\in\Rg_0(p)} \Pi_{t,s} p_{s,y,z}, \text{ for all $t\in[m]$ and $ (y,z) \in Y\times Z$.}
\end{equation*}

Rauh et al.~\cite{rauh2017extractable} define two ``extractable'' versions of shared information.  Let $\RV{S},\RV{Y},\RV{Z}$ be random variables with distribution~$p \in \DDelta^{S\times Y\times Z}$.  The \textit{extractable shared information} of $\RV{S},\RV{Y},\RV{Z}$ is defined as
\begin{equation}\label{eq:rbojb-defof-ext}
  \SIext(p) := \sup_{f} \SI(f(\RV{S});\RV{Y},\RV{Z})
\end{equation}
where the supremum is taken over all functions $f\colon S\to T$, where $S$ is the range of~$\RV{S}$ and~$T$ is an arbitrary finite set.  The \textit{probabilistically extractable shared information} is defined as
\begin{equation}\label{eq:rbojb-defof-prext}
  \SIprext(p) := \sup_{\RV{T}} \SI(\RV{T};\RV{Y},\RV{Z})
\end{equation}
where the supremum is taken over all random variables $\RV{T}$ (with finite range) which are conditionally independent of~$\RV{Y},\RV{Z}$ given~$\RV{S}$.

It is straightforward that the extractable shared information of~$p$ is the value of the following optimization problem:
\begin{subequations}\label{eq:def-ext}
  \begin{align}
    \text{With $m := \abs{S}$}:                                            \notag\\
    \SIext(p) := &\max \SI(\Pi(p))                                                 \\
    \text{over }                                                                %
    &\Pi \in \RR^{[m] \times S}                                           \notag\\
    \text{subject to }&                                                   \notag\\
    \label{eq:def-ext:sum1}
    &\Pi_{*,s} = 1      \qquad\text{for all $s\in S$}                     \\
    \label{eq:def-ext:nneg}
    &\Pi_{t,s} \ge 0    \qquad\text{for all $(t,s)\in[m]\times S$}        \\
    \label{eq:def-ext:int}
    &\Pi_{t,s} \in \ZZ  \qquad\text{for all $(t,s)\in[m]\times S$.}
  \end{align}
\end{subequations}
To see why this is the same as the definition~\eqref{eq:rbojb-defof-ext}, given in~\cite{rauh2017extractable}, let us take random variables $\RV{S},\RV{Y},\RV{Z}$ with distribution~$p$.  The \textit{integrality constraints}~\eqref{eq:def-ext:int} --- together with the nonnevativity inequalities~\eqref{eq:def-ext:nneg} and the equation --- have precisely the effect of ensuring that for every~$s$ in the range of~$\RV{S}$ there exists a unique~$t\in[m]$ with $\Pi_{t,s}=1$.  In other words, $\Pi$ defines a mapping from $\Rg\RV{S}$ to $[m]$.  Since $m$ is the size of the range of~$\RV{S}$, the optimization problem~\eqref{eq:def-ext} simply optimizes over all functions defined on the range of~$\RV{S}$, which is exactly~\eqref{eq:rbojb-defof-ext}.

Similarly, the probabilistically extractable shared information is the value of the following optimization problem:
\begin{subequations}\label{eq:def-prext}
  \begin{align}
    {}&\sup \SI(\Pi(p))                                             \\
    \text{over }                                                                %
    \label{eq:def-prext:sup_m}
    m \ge \abs{S}                                                               \\
    &\Pi \in \RR^{[m] \times S}                                           \notag\\
    \text{subject to }&                                                   \notag\\
    \label{eq:def-prext:sum1}
    &\Pi_{*,s} = 1      \qquad\text{for all $s\in S$}                           \\
    \label{eq:def-prext:nneg}
    &\Pi_{t,s} \ge 0    \qquad\text{for all $(t,s)\in[m]\times S$}.             
  \end{align}
\end{subequations}
To see why this is equivalent to the definition~\eqref{eq:rbojb-defof-prext}, given in~\cite{rauh2017extractable}, consider the relation
\begin{equation}\label{eq:rel-between-PI-and-T}
  \Pi_{t,s} = \Prb( \RV{T}=t \mid \RV{S}=s ).
\end{equation}
Given $\Pi$, it defines a random variable~$\RV{T}$ which is conditionally independent of~$\RV{X}_1,\dots,\RV{X}_k$ given~$\RV{S}$, such that $\Pi(p)$ is the distribution of $(\RV{T},\RV{X}_1,\dots,\RV{X}_k)$.
On the other hand, given a random variable~$\RV{T}$ conditionally independent of~$\RV{X}_1,\dots,\RV{X}_k$ given~$\RV{S}$, setting $m := \max\Rg_0$, relation~\eqref{eq:rel-between-PI-and-T} defines a $\Pi$ such that $\Pi(p)$ is the distribution of $(\RV{T},\RV{X}_1,\dots,\RV{X}_k)$.
We invite the reader to check these claims --- 
or read the detailed proof in~\cite[Lemma~5.2.1]{makkeh:phd:2018}.

There are two significant differences between the \eqref{eq:def-ext} and~\eqref{eq:def-prext}.  Firstly, it lacks the integrality constraints, making it a continuous optimization problem.  Secondly, the dimension, $m$, is a variable, making the optimization problem infinite dimensional (as observed in~\cite{rauh2017extractable}), and thus basically\footnote{Approximation through is thinkable.} intractable from an algorithmic point of view.  (The lower bound $m \ge S$ is redundant, see Lemma~\ref{lem:relations-between-extrmeasures} below).

The following optimization problem, however, is a standard continuous optimization problem to which we can apply our results: For a fixed value of~$m \in \NN$, let us define
\begin{subequations}\label{eq:def-myext}
  \begin{align}
    \mySI_m(p) := &\max \SI(\Pi(p))                                             \\
    \text{over }                                                                %
    &\Pi \in \RR^{m \times S}                                             \notag\\
    \text{subject to }&                                                   \notag\\
    \label{eq:def-myext:sum1}
    &\Pi_{*,s} = 1      \qquad\text{for all $s\in S$}                           \\
    \label{eq:def-myext:nneg}
    &\Pi_{t,s} \ge 0    \qquad\text{for all $(t,s)\in[m]\times S$}.             
  \end{align}
\end{subequations}

The following lemma is quite obvious (see \cite[Lemma~5.2.2]{makkeh:phd:2018}
for a detailed proof).

\begin{lemma}\label{lem:relations-between-extrmeasures}
  The sequence $m\mapsto \mySI(m)$ is non-decreasing and for every fixed $m_0\ge \abs{S}$,
  \begin{equation*}
    \SIext(p) \le \mySI_{m_0}(p) \le \sup_{m\ge0} \mySI_m(p) = \SIprext(p).
  \end{equation*}
\end{lemma}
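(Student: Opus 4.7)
The proof naturally separates into three ingredients: monotonicity of the sequence $m\mapsto\mySI_m(p)$, the sandwich $\SIext(p)\le \mySI_{m_0}(p)\le \sup_m\mySI_m(p)$, and the identification $\sup_{m\ge0}\mySI_m(p)=\SIprext(p)$. None of these looks deep; the work is bookkeeping.

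For monotonicity, my plan is a zero-padding argument. Given $m_1\le m_2$ and a feasible $\Pi\in\RR^{m_1\times S}$ for~\eqref{eq:def-myext}, define $\widetilde\Pi\in\RR^{m_2\times S}$ by adjoining $m_2-m_1$ all-zero rows. Then column sums and nonnegativity are preserved, so $\widetilde\Pi$ is feasible, and $\widetilde\Pi(p)$ equals $\Pi(p)$ with additional $T$-outcomes of probability zero inserted. Because $\SI$ depends only on the joint distribution and is invariant under insertion or deletion of zero-probability outcomes (all the marginals and conditionals entering its definition are unchanged), we have $\SI(\widetilde\Pi(p))=\SI(\Pi(p))$, yielding $\mySI_{m_1}(p)\le \mySI_{m_2}(p)$.

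For the sandwich, $\SIext(p)\le \mySI_{|S|}(p)$ holds because program~\eqref{eq:def-ext} is the restriction of~\eqref{eq:def-myext} with $m=|S|$ by the integrality constraints~\eqref{eq:def-ext:int}, and dropping constraints can only raise the maximum; combined with monotonicity this gives $\SIext(p)\le \mySI_{m_0}(p)$ for every $m_0\ge|S|$. The inequality $\mySI_{m_0}(p)\le \sup_m \mySI_m(p)$ is immediate. For the final equality, I would invoke the correspondence~\eqref{eq:rel-between-PI-and-T} already described in the text: it sends each feasible $\Pi$ for~\eqref{eq:def-myext} of dimension $m$ to a finite-range $\RV{T}$ conditionally independent of $(\RV{Y},\RV{Z})$ given $\RV{S}$, with matching joint distribution $\Pi(p)\sim(\RV{T},\RV{Y},\RV{Z})$; and conversely sends any such~$\RV{T}$ to a feasible $\Pi$ of dimension $m:=|\Rg\RV{T}|$. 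Thus the admissible objective values of~\eqref{eq:def-myext} (taken over all $m$) coincide with those of~\eqref{eq:rbojb-defof-prext}, and taking suprema gives the asserted equality.

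The only step requiring real care is the invariance of $\SI$ under zero-probability padding used in the monotonicity argument, together with the mild mismatch between the range $m\ge|S|$ appearing in~\eqref{eq:def-prext} and the range $m\ge0$ in the lemma's supremum; both are absorbed by the monotonicity we have just proved. The detailed bookkeeping for the correspondence between $\Pi$ and $\RV{T}$ has already been outlined in the text before the lemma, so referring to it together with \cite[Lemma~5.2.1]{makkeh:phd:2018} should suffice.
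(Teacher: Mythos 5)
Your proof is correct, and it follows exactly the route the paper intends: the paper itself gives no argument (it declares the lemma ``quite obvious'' and defers to \cite[Lemma~5.2.2]{makkeh:phd:2018}), but your three ingredients --- zero-padding for monotonicity using the invariance of $\SI$ under zero-probability outcomes, dropping the integrality constraints~\eqref{eq:def-ext:int} for $\SIext(p)\le\mySI_{m_0}(p)$, and the $\Pi\leftrightarrow\RV{T}$ correspondence~\eqref{eq:rel-between-PI-and-T} for the final equality --- are precisely the bookkeeping the surrounding text sets up. You also correctly flag and resolve the only two points needing care (the invariance of $\SI$ under padding, and the $m\ge|S|$ versus $m\ge0$ mismatch, both absorbed by monotonicity).
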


\subsection*{Acknowledgements}
This research was supported by the Estonian Research Council, ETAG (\textit{Eesti Teadusagentuur}), through PUT Exploratory Grant \#620.  We also gratefully acknowledge funding by the European Regional Development Fund through the Estonian Center of Excellence in Computer Science, EXCS.

\bibliographystyle{plain}

\end{document}